\pgfplotsset{width=7.5cm,compat=1.9}
\DeclareMathOperator{\SL}{SL}
\DeclareMathOperator{\GL}{GL}
\renewcommand{\leq}{\leqslant}
\renewcommand{\geq}{\geqslant}
\newcommand{\F}{\mathbb F}
\newcommand{\K}{\mathbb K}
\newcommand{\B}{\mathcal B}
\newcommand{\LL}{\mathcal L}
\newcommand{\V}{\mathcal V}
\theoremstyle{plain}
\newtheorem{lemma}{Lemma}
\newtheorem{theorem}[lemma]{Theorem}
\theoremstyle{definition}
\newtheorem{definition}[lemma]{Definition}
\newtheorem{question}[lemma]{Question}
\numberwithin{equation}{section}
\numberwithin{lemma}{section}
\begin{document}

\title{An explicit construction for large sets of infinite dimensional $q$-Steiner systems}
\author{Daniel R. Hawtin\footnote{Address: Faculty of Mathematics, University of Rijeka, Rijeka 51000, Croatia. \href{mailto:dhawtin@math.uniri.hr}{dhawtin@math.uniri.hr}}
}

\maketitle
\begin{abstract}
 Let $V$ be a vector space over the finite field $\F_q$. A \emph{$q$-Steiner system}, or an $S(t,k,V)_q$, is a collection $\B$ of $k$-dimensional subspaces of $V$ such that every $t$-dimensional subspace of $V$ is contained in a unique element of $\B$. A \emph{large set} of $q$-Steiner systems, or an $LS(t,k,V)_q$, is a partition of the $k$-dimensional subspaces of $V$ into $S(t,k,V)_q$ systems. In the case that $V$ has infinite dimension, the existence of an $LS(t,k,V)_q$ for all finite $t,k$ with $1<t<k$ was shown by Cameron in 1995. This paper provides an explicit construction of an $LS(t,t+1,V)_q$ for all prime powers $q$, all positive integers $t$, and where $V$ has countably infinite dimension. 
\end{abstract}

\section{Introduction}\label{sect:intro}

Let $\V$ be a set. A \emph{Steiner system}, or more precisely an $S(t,k,\V)$, is a collection $\B$ of $k$-subsets of $\V$ such that every $t$-subset of $\V$ is contained in precisely one element of $\B$. There is a vast literature on Steiner systems in the case that $\V$ is finite, and these have many applications in combinatorics and other areas; we refer the reader to \cite{colbourn2006steiner} for basic properties and results. Keevash \cite{keevash2014existence} proved that Steiner systems exist whenever their parameters satisfy certain basic divisibility conditions, provided $|\V|$ is large enough. Despite this result, there are currently no known Steiner systems for $\V$ finite and $t\geq 6$, to the best knowledge of this author. In the case that $\V$ is infinite, K{\"o}hler \cite{kohler1977unendliche} proved that $S(t,k,\V)$ systems exist for all finite $t,k$ with $1< t<k$ and Grannell \emph{et al.}~\cite{grannell1987countably,grannell1991infinite} explicitly constructed $S(t,t+1,\V)$ systems.

A large set of Steiner systems, or an $LS(t,k,\V)$, is a partition the set of all $k$-subsets of $\V$ into $S(t,k,\V)$ systems. The existence of an $LS(2,3,\V)$ for all $n=|\V|$ with $n\neq 7$ and $n\equiv 1,3 {\pmod 6}$ was established in a series of papers by Lu \cite{lu1983large}, that was unfortunately never completely published; a full proof can be found in \cite{teirlinck1991completion}. The only other known large sets of Steiner systems, for $\V$ finite, are $LS(2,4,\V)$ systems with $|\V|=13$ \cite{chouinard1983partitions} and $|\V|=16$ \cite{mathon1997searching}. In the infinite case, the explicit construction of $S(t,t+1,\V)$ systems by Grannell \emph{et al.}~\cite{grannell1991infinite}, mentioned in the previous paragraph, in fact produces an $LS(t,t+1,\V)$, and Cameron \cite{cameron1995note} proved that an $LS(t,k,\V)$ exists for all $t,k$ with $1<t<k$ (note that the size of $\V$ is not restricted to be countably infinite in either of these results). 

Here we are concerned with the $q$-analogues of Steiner systems. Let $V$ be a vector space over $\F_q$. A \emph{$q$-Steiner system}, or an $S(t,k,V)_q$, is a collection $\B$ of $k$-dimensional subspaces of $V$ such that each $t$-dimensional subspace of $V$ is contained in precisely one element of $\B$. Ray-Chaudhuri and Singhi \cite{ray1989q} proved an asymptotic existence result for $q$-Steiner systems as far back as 1989. More recently, Braun~{\em et al.}~\cite{braunqSteinerexist2016} constructed the first examples, producing a large number of non-isomorhpic $S(2,3,\F_2^{13})_2$ systems, which are the only currently know $q$-Steiner systems. As far as the author is aware, there are no known explicit constructions in the case that $V$ has infinite dimension.

A large set of $q$-Steiner systems, or an $LS(t,k,V)_q$, is a partition of the set of all $k$-subsets of $V$ into $S(t,k,V)_q$ systems. Cameron also considers these vector space analogues in \cite{cameron1995note}, proving that $LS(t,k,V)_q$ systems exist for all finite $t,k$ with $1<t<k$. (Note that Cameron's result also includes the case for vector spaces over infinite fields.) Again, as far as this author is aware, there are no known explicit constructions of large sets of $q$-Steiner systems; the main result of this paper, below, is to provide such a construction.

\begin{theorem}\label{thm:mainresult}
 Let $q$ be a prime power, let $t$ be a positive integer, let $\K$ be the algebraic closure of $\F_q$ and let $\LL$ be as in Definition~\ref{def:largeset}. Then $\LL$ is an $LS(t,t+1,\K)_q$.
\end{theorem}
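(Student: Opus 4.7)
The plan is to verify directly the two conditions defining an $LS(t,t+1,\K)_q$: first, that $\LL$ is a partition of the set of $(t+1)$-dimensional $\F_q$-subspaces of $\K$; and second, that each part of $\LL$ is itself an $S(t,t+1,\K)_q$, so that every $t$-dimensional $\F_q$-subspace of $\K$ is contained in a unique $(t+1)$-dimensional subspace of that part.

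For the partition property, I would unpack Definition~\ref{def:largeset} to identify the invariant that labels the parts of $\LL$, and then check that it is well-defined on every $(t+1)$-dimensional $\F_q$-subspace of $\K$ and that the members of $\LL$ are exactly its fibres. Once the explicit form of the invariant is extracted, this step should reduce to bookkeeping.

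For the Steiner property, fix a part $\B_c \in \LL$ and a $t$-dimensional subspace $U \subset \K$. I need to produce exactly one $W \in \B_c$ with $U \subset W$. Existence should be constructive: since $U$ has finite dimension, it lies in some $\F_{q^n}$, and the ``room'' available in $\K$ outside this finite field should allow one to append a suitable element $v$ to $U$ so that $W = U + \F_q v$ acquires index $c$. Uniqueness is the harder half: if $W_1, W_2 \in \B_c$ both contain $U$, we must deduce $W_1 = W_2$, which amounts to a rigidity statement saying that the invariant, restricted to the $(t+1)$-dimensional extensions of $U$, is injective onto the index set.

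The main obstacle is this uniqueness. Because $\K$ is infinite dimensional, there are infinitely many $(t+1)$-dimensional extensions of $U$, so the invariant must separate them with surgical precision. I expect the argument to exploit the tower $\F_q \subset \F_{q^2} \subset \cdots$ whose union is $\K$, together with the way Definition~\ref{def:largeset} normalises each subspace relative to a canonical position in this tower---likely via a Galois or Frobenius action, or via a filtration by the minimal enveloping subfield---so that two distinct $(t+1)$-dimensional extensions of $U$ cannot produce the same invariant.
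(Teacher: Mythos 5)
Your skeleton is the right one---show that $\LL$ partitions the set of $(t+1)$-spaces and that each $\B_a$ is an $S(t,t+1,\K)_q$---and your reformulation of uniqueness as injectivity of the labelling invariant on the $(t+1)$-dimensional extensions of a fixed $t$-space $U$ is exactly the statement that has to be proved. But the proposal stops at naming the statements: both halves of the Steiner property are left as expectations, and the mechanism you guess for the crucial injectivity step (a Galois/Frobenius action, or a filtration by minimal enveloping subfields) is not the one that works. The actual argument (Lemma~\ref{lem:intersection}) is purely about polynomial degree: two distinct blocks of $\B_a$ are $\ker f_{a,B}$ and $\ker f_{a,C}$, where $f_{a,B}$ and $f_{a,C}$ share the same coefficients of $x$ and of $x^{q^{t+1}}$, so $f_{a,B}-f_{a,C}=(d_1x+\cdots+d_tx^{q^{t-1}})^q$ is a $q$-th power of a linearised polynomial of $q$-degree at most $t-1$; any common divisor of $f_{a,B}$ and $f_{a,C}$ divides this difference, hence has at most $q^{t-1}$ roots, so the two kernels meet in at most a $(t-1)$-space and cannot both contain $U$. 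No tower of subfields enters.

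For existence, your appeal to ``room'' in $\K$ is too weak: what is actually used is that $\K$ is algebraically closed, so the determinant $g$ built from a basis of $U$ (as in the proof of Lemma~\ref{lem:covered}), whose root set is exactly $U$, attains the value $a$ at some $v_{t+1}\notin U$; the corresponding $(t+1)\times(t+1)$ determinant is then an $f_{a,B}$ whose kernel contains $U$ and lies in $\B_a$. Similarly, the ``bookkeeping'' for the partition is not free: it needs Lemma~\ref{lem:scalarMult}, that $\ker f_{a,B}=\ker f_{c,D}$ forces $(c,D)=\lambda(a,B)$ with $\lambda\in\F_q$ (two separable linearised polynomials of the same degree with the same root set are proportional, and comparing the coefficients of $x$ and of $x^{q^{t+1}}$ forces the ratio into $\F_q$), together with the same determinant construction to show that every $(t+1)$-space arises as some $\ker f_{b,C}$. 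As written, the proposal is a plausible outline with the three substantive lemmas missing and the key one misdiagnosed.
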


The paper is organised as follows: Section~\ref{sect:prelim} introduces some notation and preliminary results and Section~\ref{sect:proof} gives the construction and proof of Theorem~\ref{thm:mainresult}.

\section{Preliminaries}\label{sect:prelim}

Let $\K$ be the algebraic closure of $\F_q$. Then $\K=\bigcup_{i=1}^\infty \F_{q^i}$ has countably infinite dimension as a vector space over $\F_q$. All vector spaces considered in this paper will be $\F_q$-subspaces of $\K$.

A \emph{linearised polynomial} or a \emph{$q$-polynomial} is a polynomial of the form
\[
 f(x)=a_0x + a_1x^q + \cdots + a_{k-1}x^{q^{k-1}} + a_kx^{q^k},
\]
where $k\geq 0$ and $a_i\in\K$ for each $i=0,1,\ldots,k$ (see, for instance, \cite[Chapter~3, Section~4]{lidl1997finite}). The roots of $f$ form a vector space $V$ (see \cite[Theorem~3.50]{lidl1997finite}) with each root having multiplicity a power of $q$ (possibly $1=q^0$). The roots of $f$ have multiplicity $1$ if and only if $a_0\neq 0$. If $a_k\neq 0$ then we say the \emph{$q$-degree} of $f$ is $k$ and, if $a_0\neq 0$ as well, then the roots of $f$ form a $k$-dimensional subspace of $\K$. We denote the vector space of roots of $f$ by $\ker f$.

Suppose $f$ is as above and that $a_0,a_k\neq 0$. The coefficients of $f$ are related to elements $v_1,\ldots,v_k\in \K$ consisting of a basis for the subspace $V$ of roots of $f$ via the determinant of the $(k+1)\times(k+1)$ matrix
\begin{equation}\label{eq:matrix}
 M=
 \begin{bmatrix}
  x & x^q & x^{q^2} & \cdots & x^{q^k}\\
  v_1 & v_1^q & v_1^{q^2} & \cdots & v_1^{q^k} \\
  v_2 & v_2^q & v_2^{q^2} & \cdots & v_2^{q^k} \\
  \vdots & \vdots & \vdots & \ddots & \vdots\\
  v_k & v_k^q & v_k^{q^2} & \cdots & v_k^{q^k}
 \end{bmatrix}.
\end{equation}
In particular, expanding along the top row of $M$ shows that (up to replacing $f$ by $\alpha f$ for some $\alpha\in\K$) the coefficient of $x^{q^i}$ is given by the determinant of the $k\times k$ minor of $M$ obtained by omitting the top row and the column containing $x^{q^i}$. The minors of $M$ just discussed are related to the Dickson invariants, see \cite{dickson1911fundamental} and \cite[Section~8.1]{benson1993polynomial}. Note that if $A\in\SL_k(q)$ then $V$ and the determinant of $M$ (and each of the relevant $k\times k$ minors) is invariant under the map
\[
 \begin{bmatrix}
  v_1  \\
  v_2  \\
  \vdots \\
  v_k
 \end{bmatrix}
 \mapsto
 A
 \begin{bmatrix}
  v_1  \\
  v_2  \\
  \vdots \\
  v_k
 \end{bmatrix}.  
\]
If instead $A$ above is an element of $\GL_k(q)$ then while $V$ is still preserved by the above map, the determinant of $M$, and the minors considered above, are all scaled by $\det(A)\in \F_q$.

\section{The construction}\label{sect:proof}

Throughout this section, let $\K$ be the algebraic closure of $\F_q$ and fix a positive integer $t$. Recall that all vector spaces are vector spaces over $\F_q$. The construction is defined below.

For each $a\in \K\setminus\{0\}$ and $B=(b_1,\ldots,b_t)\in \K^t$ define $f_{a,B}$ to be the following linearised polynomial in $\K[x]$:
\begin{equation}\label{eqn:faB}
 f_{a,B}=a^q x+b_1 x^q\cdots+b_t x^{q^t}+(-1)^{t+1} a x^{q^{t+1}}.
\end{equation}
Note that the relationship between the coefficients of $x$ and $x^{q^{t+1}}$ in $f_{a,B}$ comes from considering the expansion for the determinant of $M$ in (\ref{eq:matrix}). Further, for each $a\in \K\setminus\{0\}$, define a set $\B_a$ of subspaces of $\K$:
\begin{equation}\label{eqn:Bsuba}
 \B_a=\{\ker f_{a,B}\mid B\in \K^t\}.
\end{equation}
Note that each element of $\B_a$ is a $(t+1)$-space, since the coefficients of $x$ and $x^{q^{t+1}}$ in $f_{a,B}$ are non-zero when $a\neq 0$.
 
\begin{definition}\label{def:largeset}
 Using the notation as in (\ref{eqn:faB}) and~(\ref{eqn:Bsuba}), define
 \[
  \LL=\{\B_a\mid a\in\K\setminus\{0\}\}.
 \] 
\end{definition}



Before moving towards the proof of the main theorem, the next result effectively shows that $\B_a=\B_b$ whenever $a$ and $b$ lie in the same one-dimensional subspace of $\K$.

\begin{lemma}\label{lem:scalarMult}
 Let $a\in \K\setminus \{0\}$, let $B=(b_1,\ldots,b_t)\in \K^t$, let $f_{a,B}$ be as in (\ref{eqn:faB}) and let $V=\ker f_{a,B}$. Then $V=\ker f_{c,D}$ for some $c\in \K\setminus\{0\}$ and $D\in \K^t$ if and only if $c=\lambda a$ and $D=\lambda B$ for some $\lambda\in\F_q$.
\end{lemma}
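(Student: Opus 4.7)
The plan is to exploit the uniqueness, up to a $\K$-scalar, of a linearised polynomial of $q$-degree $t+1$ with a given $(t+1)$-dimensional kernel, and then extract the constraint that this scalar must in fact lie in $\F_q$ by comparing the two extremal coefficients, where $a^q$ and $a$ appear.

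For the forward direction, suppose $V = \ker f_{a,B} = \ker f_{c,D}$. Since $a,c \neq 0$, both $f_{a,B}$ and $f_{c,D}$ are linearised polynomials of $q$-degree $t+1$ with simple roots, and their root sets coincide with $V$, which has $q^{t+1}$ elements. Thus both polynomials are scalar multiples of $\prod_{v \in V}(x-v)$, so $f_{c,D} = \mu f_{a,B}$ for some $\mu \in \K^*$. Comparing the coefficient of $x^{q^{t+1}}$ yields $(-1)^{t+1} c = \mu (-1)^{t+1} a$, hence $c = \mu a$. Comparing the coefficient of $x$ then gives $c^q = \mu a^q$; substituting $c = \mu a$ produces $\mu^q a^q = \mu a^q$, and since $a \neq 0$ this forces $\mu^q = \mu$, i.e.\ $\mu \in \F_q$. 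Setting $\lambda = \mu$, the remaining coefficient comparisons $d_i = \mu b_i$ give $D = \lambda B$.

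For the converse, suppose $c = \lambda a$ and $D = \lambda B$ with $\lambda \in \F_q$. Because $\lambda^q = \lambda$, one checks term by term that $f_{c,D} = \lambda f_{a,B}$: the top coefficient scales as $(-1)^{t+1} c = \lambda(-1)^{t+1}a$, the bottom as $c^q = \lambda^q a^q = \lambda a^q$, and each middle coefficient as $\lambda b_i$. Hence $\ker f_{c,D} = \ker f_{a,B} = V$.

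The only non-routine step is the scalar-multiple claim in the forward direction; it can either be argued directly from the fact that a polynomial of degree $q^{t+1}$ is determined up to a leading coefficient by its $q^{t+1}$ distinct roots, or else read off from the matrix $M$ in \eqref{eq:matrix}, whose associated $q$-polynomial is well-defined up to a $\K$-scalar once a basis of $V$ is chosen. Everything else is bookkeeping in the coefficients.
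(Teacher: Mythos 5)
Your proof is correct and follows essentially the same route as the paper's: both establish $f_{c,D}=\mu f_{a,B}$ from the common degree and common simple-root set, then compare the coefficients of $x$ and $x^{q^{t+1}}$ to force $\mu^q=\mu$, and verify the converse by direct substitution. No substantive differences.
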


\begin{proof}
 Suppose there exists some $c\in \K\setminus\{0\}$ and $D\in \K^t$ such that $\ker f_{c,D}=V$. Since the coefficients of $x$ in each of $f_{a,B}$ and $f_{c,D}$ are non-zero, it follows that the roots of each all have multiplicity $1$. The fact that both polynomials have the same degree and same roots then implies that there exists some $\mu\in \K$ such that $f_{c,D}=\mu f_{a,B}$. Comparing the coefficients of $x$, we have that $\mu a^q=b^q$. However, comparing the coefficients of $x^{q^{t+1}}$ gives $\mu a=b$. Hence $\mu a^q=(\mu a)^q$ which implies that $\mu^q-\mu=0$ and $\mu\in\F_q$. Setting $\lambda=\mu$ we then have that $c=\lambda a$ and $D=\lambda B$. Conversely, suppose that $c=\lambda a$ and $D=\lambda B$ for some $\lambda\in\F_q$. Then
 \begin{align*} 
  f_{c,D}&=  (\lambda a)^q x+\lambda b_1 x^q+\cdots+\lambda b_t x^{q^t}+(-1)^{t+1} \lambda a x^{q^{t+1}}\\
  &=  \lambda a^q x+\lambda b_1 x^q+\cdots+\lambda b_t x^{q^t}+(-1)^{t+1} \lambda a x^{q^{t+1}}\\
  &=  \lambda f_{a,B}.
 \end{align*}
 Thus the result holds.
\end{proof}

The next result shows that every $t$-dimensional subspace of $\K$ is contained in at most one element of $\B_a$.

\begin{lemma}\label{lem:intersection}
 Let $a\in \K\setminus\{0\}$ and $\B_a$ be as in (\ref{eqn:Bsuba}). Then every element of $\B_a$ is a $(t+1)$-space of $\K$ and any pair of distinct elements intersect in at most a $(t-1)$-space. 
\end{lemma}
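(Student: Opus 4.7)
The first assertion was already essentially observed after equation~(\ref{eqn:Bsuba}): when $a\neq 0$, both the $x$-coefficient $a^q$ and the $x^{q^{t+1}}$-coefficient $(-1)^{t+1}a$ of $f_{a,B}$ are non-zero, so $f_{a,B}$ is a linearised polynomial of $q$-degree exactly $t+1$ with all roots simple; by the theory recalled in Section~\ref{sect:prelim}, its kernel is then a $(t+1)$-dimensional $\F_q$-subspace of $\K$.

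For the intersection bound, fix distinct $V=\ker f_{a,B}$ and $W=\ker f_{a,B'}$ in $\B_a$. Applying Lemma~\ref{lem:scalarMult} with $c=a$ (which forces $\lambda=1$) shows that $V\neq W$ implies $B\neq B'$. I would then consider
\[
 g(x):=f_{a,B}(x)-f_{a,B'}(x),
\]
which is itself a linearised polynomial and satisfies $V\cap W\subseteq \ker g$. The decisive observation is that the coefficients of $x$ and of $x^{q^{t+1}}$ both cancel, since they depend only on $a$, so
\[
 g(x)=(b_1-b_1')x^q+(b_2-b_2')x^{q^2}+\cdots+(b_t-b_t')x^{q^t},
\]
a non-zero linearised polynomial (because $B\neq B'$) supported only on the $q$-powers $q^1,\ldots,q^t$.

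It then suffices to show that any such $g$ has $\dim\ker g\leq t-1$. Let $j$ and $k$ be the smallest and largest indices $i\in\{1,\ldots,t\}$ with $b_i\neq b_i'$, so $1\leq j\leq k\leq t$. Since $g$ has no $x$-term it is inseparable; but because $\K$ is algebraically closed one can extract $q^j$-th roots of the coefficients of $g$ and factor $g(x)=F(x)^{q^j}$, where $F$ is a linearised polynomial of $q$-degree $k-j$ with non-zero $x$-coefficient. Then $\ker g=\ker F$, which by Section~\ref{sect:prelim} has $\F_q$-dimension $k-j\leq t-1$; this yields $\dim(V\cap W)\leq t-1$, as required.

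The only genuinely non-routine step is this last dimension bound: a naive ``degree at most $q^t$'' argument yields only $\dim\ker g\leq t$, so one must exploit the vanishing of the $x$-coefficient of $g$ (equivalently, the $q^j$-th root extraction in $\K$) to gain the extra factor of $q$ and land at $t-1$. Everything else is bookkeeping.
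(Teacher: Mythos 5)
Your proof is correct and follows essentially the same route as the paper: the paper likewise observes that $f_{a,B}-f_{a,C}$ is supported only on $x^q,\ldots,x^{q^t}$ and writes it as the $q$-th power of a linearised polynomial of degree $q^{t-1}$, so that the common roots number at most $q^{t-1}$. Your refinement of extracting the $q^j$-th root for the minimal nonvanishing index $j$ gives the marginally sharper bound $k-j$, but the decisive idea --- using the vanishing of the $x$-coefficient of the difference to gain the extra factor of $q$ --- is identical.
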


\begin{proof}
 Let $B=(b_1,\ldots,b_t)\in \K^t$. Then, since the coefficients of $x$ and $x^{q^{t+1}}$ in $f_{a,B}$ are both non-zero, it follows that $\ker f_{a,B}\in\B_a$ has dimension $t+1$. Let $C=(c_1,\ldots,c_t)\in \K^t$ with $B\neq C$. Then 
 \begin{align*}
  f_{a,B}-f_{a,C}=&(b_1-c_1) x^q+\cdots+(b_t-c_t) x^{q^t}\\
  =&\left(d_1 x+\cdots+d_{t} x^{q^{t-1}}\right)^q,
 \end{align*}
 where $d_i\in \K$ such that $d_i^q=b_i-c_i$, for $i=1,\ldots,t$. Moreover, since $B\neq C$, it follows that the greatest common divisor of $f_{a,B}$ and $f_{a,C}$ in $\K[x]$ divides $f_{a,B}-f_{a,C}$, and thus has at most $q^{t-1}$ distinct roots. Hence the kernels of $f_{a,B}$ and $f_{a,C}$ intersect in at most a $(t-1)$-subspace.
\end{proof}

Given the lemma above, the next result is enough for us to deduce that $\B_a$ is an $S(t,t+1,\K)_q$.

\begin{lemma}\label{lem:covered}
 Let $a\in \K\setminus\{0\}$ and $\B_a$ be as in (\ref{eqn:Bsuba}). Then every $t$-space of $\K$ is contained in some element of $\B_a$.
\end{lemma}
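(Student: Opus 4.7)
The plan is to take an arbitrary $t$-dimensional subspace $W$ of $\K$ with basis $w_1,\ldots,w_t$, and produce an element $w_{t+1}\in\K$ such that the $(t+1)$-space $V=\langle w_1,\ldots,w_{t+1}\rangle$ lies in $\B_a$. By the discussion in Section~\ref{sect:prelim}, the $q$-polynomial whose kernel is $V$ is (up to scalar) the determinant of the matrix $M$ in~(\ref{eq:matrix}) with $k=t+1$ and $v_i$ replaced by $w_i$. Expanding $\det M$ along the top row shows that the coefficient of $x^{q^{t+1}}$ is $(-1)^{t+1}D$, where $D=\det[w_i^{q^{j-1}}]_{1\leq i,j\leq t+1}$ is the Moore determinant of $w_1,\ldots,w_{t+1}$; meanwhile the coefficient of $x$ equals $D^q$, since the $(t+1)\times (t+1)$ minor omitting the first column is the entrywise $q$-th power of the Moore matrix, and determinant commutes with Frobenius. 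Comparing with~(\ref{eqn:faB}), the equality $\det M=f_{a,B}$ then holds for an appropriate $B\in\K^t$ provided $D=a$.

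So it suffices to choose $w_{t+1}\in\K$ so that $D=a$. Fixing $w_1,\ldots,w_t$ and regarding $D$ as a function of $u=w_{t+1}$, expansion along the last row of the Moore matrix produces a linearised polynomial $D(u)$ in $u$ of $q$-degree $t$ whose leading coefficient is the Moore determinant of $w_1,\ldots,w_t$, which is nonzero by the $\F_q$-linear independence of the $w_i$. Since $\K$ is algebraically closed, the nonconstant polynomial $D(u)-a$ has a root $w_{t+1}\in\K$; and since $a\neq 0$ this $w_{t+1}$ cannot lie in $W$ (else $D=0$), so $V=\langle w_1,\ldots,w_{t+1}\rangle$ is genuinely $(t+1)$-dimensional. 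By construction $\det M=f_{a,B}$ for the resulting $B\in\K^t$, giving $W\subset V=\ker f_{a,B}\in\B_a$, as required.

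The only real subtlety is the bookkeeping of signs and Frobenius twists determining the coefficients of $\det M$, in particular the asymmetric matching $a^q x$ versus $(-1)^{t+1}ax^{q^{t+1}}$ required by the definition of $f_{a,B}$; everything else reduces to the single polynomial equation $D(u)=a$, which is solvable precisely because $\K$ is algebraically closed and the Moore determinant is a linearised polynomial of positive $q$-degree in its last argument.
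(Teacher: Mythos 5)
Your proof is correct and follows essentially the same route as the paper: the paper's auxiliary polynomial $g$ (the determinant with $x$ in the bottom row) is exactly your Moore determinant $D(u)$ viewed as a linearised polynomial in its last argument, and both arguments solve $D(u)=a$ using algebraic closure and then read off $f_{a,B}$ from the expansion of $\det M$. Your version merely makes explicit the coefficient bookkeeping ($D^q$ for $x$, $(-1)^{t+1}D$ for $x^{q^{t+1}}$) and the nonvanishing of the leading coefficient, which the paper leaves implicit.
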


\begin{proof}
 Let $V=\langle v_1,\ldots,v_t\rangle$ be an arbitrary $t$-space of $\K$. Let $g\in \K[x]$ be defined by 
 \[
 g=
  \begin{vmatrix}
   v_1 & v_1^q & v_1^{q^2} & \cdots & v_1^{q^t} \\
   v_2 & v_2^q & v_2^{q^2} & \cdots & v_2^{q^t} \\
   \vdots & \vdots & \vdots & \ddots & \vdots\\
   v_t & v_t^q & v_t^{q^2} & \cdots & v_t^{q^t} \\
   x & x^q & x^{q^2} & \cdots & x^{q^t}
  \end{vmatrix}.
 \]
 If $x\in V$ then the bottom row in the determinant above is a linear combination of those above, which implies that $g(x)=0$. Since $g$ has degree $q^t$, it follows that $V$ is the set of all roots of $g$. Since $\K$ is algebraically closed and $a\neq 0$, it follows that there exists some $v_{t+1}\in \K\setminus V$ such that $g(v_{t+1})=a$. Thus, setting
 \[
  f_{a,B}=
  \begin{vmatrix}
   x & x^q & x^{q^2} & \cdots & x^{q^{t+1}}\\
   v_1 & v_1^q & v_1^{q^2} & \cdots & v_1^{q^{t+1}} \\
   v_2 & v_2^q & v_2^{q^2} & \cdots & v_2^{q^{t+1}} \\
   \vdots & \vdots & \vdots & \ddots & \vdots\\
   v_{t+1} & v_{t+1}^q & v_{t+1}^{q^2} & \cdots & v_{t+1}^{q^{t+1}}
  \end{vmatrix},
 \]
 defines $B\in \K^t$ such that $V\leq\ker f_{a,B}$. Since $\ker f_{a,B}\in\B_a$, the result holds.
\end{proof}

We are now in a position to prove the main theorem.


\begin{proof}[Proof of Theorem~\ref{thm:mainresult}]
 If $a\in \K\setminus\{0\}$ then it follows from Lemmas~\ref{lem:intersection} and~\ref{lem:covered} that $\B_a$ is an $S(t,t+1,\K)_q$. To see that $\LL=\{\B_a\mid a\in \K\setminus\{0\}\}$ is an $LS(t,t+1,\K)_q$ it remains to prove that for every $(t+1)$-space $V$ of $\K$ there exists some $b\in \K\setminus\{0\}$ such that $V$ lies in $\B_b$. Let $v_1,\ldots,v_{t+1}$ be a basis for $V$. Then setting
 \[
  f_{b,C}=
  \begin{vmatrix}
   x & x^q & x^{q^2} & \cdots & x^{q^{t+1}}\\
   v_1 & v_1^q & v_1^{q^2} & \cdots & v_1^{q^{t+1}} \\
   v_2 & v_2^q & v_2^{q^2} & \cdots & v_2^{q^{t+1}} \\
   \vdots & \vdots & \vdots & \ddots & \vdots\\
   v_{t+1} & v_{t+1}^q & v_{t+1}^{q^2} & \cdots & v_{t+1}^{q^{t+1}}
  \end{vmatrix},
 \]
 determines $b\in\K\setminus\{0\}$ and $C\in\K^t$ such that $V=\ker f_{b,C}\in\B_b$. This completes the proof.
\end{proof}

We finish with an open problem.

\begin{question}
 Can similar methods be used to construct an $LS(t,k,\K)_q$ for positive integers $t$ and $k$ such that $k\geq t+2$?
\end{question}



\begin{thebibliography}{10}

\bibitem{benson1993polynomial}
D.J. Benson.
\newblock {\em Polynomial invariants of finite groups}, volume 190 of {\em
  London Mathematical Society, Lecture Note Series}.
\newblock Cambridge University Press, 1993.

\bibitem{braunqSteinerexist2016}
M.~Braun, T.~Etzion, P.R.J. {\"O}sterg{\aa}rd, A.~Vardy, and A.~Wassermann.
\newblock Existence of $q$-analogs of {S}teiner systems.
\newblock {\em Forum of Mathematics, Pi}, 4, 2016.

\bibitem{cameron1995note}
P.J. Cameron.
\newblock Note on large sets of infinite {S}teiner systems.
\newblock {\em Journal of Combinatorial Designs}, 3(4):307--311, 1995.

\bibitem{chouinard1983partitions}
L.G. Chouinard~II.
\newblock Partitions of the 4-subsets of a 13-set into disjoint projective
  planes.
\newblock {\em Discrete Mathematics}, 45(2--3):297--300, 1983.

\bibitem{colbourn2006steiner}
C.J. Colbourn and R.~Mathon.
\newblock Steiner systems.
\newblock In {\em Handbook of Combinatorial Designs}, pages 128--135. Chapman
  and Hall/CRC, 2006.

\bibitem{dickson1911fundamental}
L.E. Dickson.
\newblock A fundamental system of invariants of the general modular linear
  group with a solution of the form problem.
\newblock {\em Transactions of the American Mathematical Society},
  12(1):75--98, 1911.

\bibitem{grannell1987countably}
M.J. Grannell, T.S. Griggs, and J.S. Phelan.
\newblock Countably infinite {S}teiner triple systems.
\newblock {\em Ars Combinatoria}, 24:189--216, 1987.

\bibitem{grannell1991infinite}
M.J. Grannell, T.S. Griggs, and J.S. Phelan.
\newblock On infinite {S}teiner systems.
\newblock {\em Discrete Mathematics}, 97:199--202, 1991.

\bibitem{keevash2014existence}
P.~Keevash.
\newblock The existence of designs.
\newblock {\em arXiv preprint arXiv:1401.3665}, 2014.

\bibitem{kohler1977unendliche}
E.~K{\"o}hler.
\newblock Unendliche gefaserte {S}teinersysteme.
\newblock {\em Journal of Geometry}, 9:73--77, 1977.

\bibitem{lidl1997finite}
R.~Lidl and H.~Niederreiter.
\newblock {\em Finite fields}, volume~20.
\newblock Cambridge University Press, 1997.

\bibitem{lu1983large}
J.-X. Lu.
\newblock On large sets of disjoint steiner triple systems i.
\newblock {\em Journal of Combinatorial Theory, Series A}, 34(2):140--146,
  1983.

\bibitem{mathon1997searching}
R.~Mathon.
\newblock Searching for spreads and packings.
\newblock In J.W.P. Hirschfeld, S.S. Magliveras, and M.J.~De Resmini, editors,
  {\em Geometry, combinatorial designs and related structures}, London
  Mathematical Society, Lecture Note Series, pages 161--176. Cambridge
  University Press, 1997.

\bibitem{ray1989q}
D.K. Ray-Chaudhuri and N.M. Singhi.
\newblock $q$-{A}nalogues of $t$-designs and their existence.
\newblock {\em Linear Algebra and its Applications}, 114:57--68, 1989.

\bibitem{teirlinck1991completion}
L.~Teirlinck.
\newblock A completion of {L}u's determination of the spectrum for large sets
  of disjoint {S}teiner triple systems.
\newblock {\em Journal of Combinatorial Theory, Series A}, 57(2):302--305,
  1991.

\end{thebibliography}
\end{document}